\newcommand{\prob}[1]{\text{Pr}\{#1\}}
\newcommand{\expect}[1]{\mathbb{E}\big\{#1\big\}}
\newcommand{\bv}[1]{{\boldsymbol{#1} }}
\newcommand{\script}[1]{{{\cal{#1} }}}
\newtheorem{lemma}{\textbf{Lemma}}
\newtheorem{proposition}{Proposition}
\begin{document}

\title{Optimizing Age-of-Information in a Multi-class Queueing System}
\author{\large{Longbo Huang}$^\dagger$ and \large{Eytan Modiano}$^*$\\
$^\dagger$longbohuang@tsinghua.edu.cn, IIIS@Tsinghua University\\
$^*$modiano@mit.edu,  MIT
}
 
\maketitle

\begin{abstract}
We consider the age-of-information in a multi-class $M/G/1$ queueing system, where each class generates packets containing status information.  Age of information is a relatively new metric that measures the amount of time that elapsed between status updates, thus accounting for both the queueing delay and the delay between packet generation.  This gives rise to a tradeoff between frequency of status updates, and queueing delay.  In this paper, we study this tradeoff in a system with heterogenous users modeled as a multi-class $M/G/1$  queue.  To this end, we derive the exact peak age-of-Information (PAoI) profile of the system, which measures the ``freshness'' of the status information. We then seek to optimize   the age of information, by formulating the problem using quasiconvex optimization, and obtain structural properties of the optimal solution. 
\end{abstract}


\section{Introduction} 
Realtime status information is critical for optimal control in many networked systems, such as sensor networks used to monitor temperature or other physical phenomenon    \cite{msra-sensor-2009}; autonomous vehicle systems, where accurate position information is required  to avoid collisions  \cite{self-driving-scheduling}; or wireless networks where realtime channel state information is needed to make  scheduling decisions \cite{delayed-scheduling}.  In all of these systems, what matters is not how fast the update information gets delivered, but rather, how accurately the received information describes the physical phenomenon being observed. 


Recently, \cite{yates-realtime-12}  proposes the notion of \emph{age-of-information} (AoI), which measures the average time between the generation of an update message until it is received by the control unit; thus measuring the ``freshness" of the available information.  The early works on age-of-information  consider homogeneous systems, where all entities use the same amount of resources for status update, and the length of status messages can be modeled using an i.i.d. exponential distribution. 

In this paper, we consider a heterogenous systems where  entities generate status messages with different length (service time) distributions.  In particular, we consider a multi-class M/G/1 queueing systems; where each entity generates status update messages according to a given distribution, and derive the exact \emph{peak age-of-information} (PAoI) value for each entity, which is the average maximum elapsed time since the latest received update packet is generated, and  captures the extent to which update information is delayed. 
We then consider a system where, for packet management, at most one packet can be kept in the system, and compute the  PAoI for the multi-class $M/G/1/1$ queue.   

Next, we turn our attention to the problem of optimizing the PAoI by controlling the arrival rate of update messages (i.e., the sampling rate of the physical process being observed).
We formulate the  optimization problem as a minimization of a quasiconvex cost function of the system age-of-information profile. We show that in the $M/G/1/1$ case, the optimization problem is a quasi-convex program and derive properties of the optimal solution. In the general $M/G/1$ case, however, the problem is a general non-convex program for which we derive an approximate solution. 
   
The notion of age-of-information was first introduced in  \cite{yates-realtime-12} in the context of a single source modeled as an M/M/1 queue, and extended to multiple sources in  \cite{yates-multiple-12}.  In \cite{yates-piggyback-vehicle-11, yates-aoi-vehicle-11} the authors consider the problem of minimizing the age of system state in vehicular networks. In \cite{ephremides-aoi-13} the AoI is analyzed for a system with random delays, and in \cite{tony-aoi-isit-14}  PAoI is derived for a single-class $M/M/1$ queueing system. 

Our work differs from these prior works in a number of ways.  First, we focus on the PAoI metric, which is closely related to the AoI,  but is much more tractable, thus facilitating its optimization.  Second, we consider general service time distributions, whereas previous works focus mainly on exponential service time. Finally, we minimize the {\it cost} of the PAoI in a system with heterogenous service requirements; where the service requirements of different entities are modeled using quasi-convex cost functions of the PAoI.

It is important to note here that AoI is different from the traditional delay metric considered in communication systems. Indeed, our results show that PAoI minimization is equivalent to minimizing the sum of update interval and update packet delay.  
Due to this difference, the ultimate optimization problem turns out to be non-convex.


This paper is organized as follows. In Section \ref{section:system} we present the system model. We derive the PAoI values for $M/G/1$ and $M/G/1/1$   in Section \ref{section:aoi}. We consider the system cost optimization problem in Section \ref{section:utility}, and present numerical results in Section \ref{section:num}.  We conclude the paper  in Section \ref{section:conclusion}. 


\section{System Model}\label{section:system}
We consider a system that consists of a set of $N$ entities, denoted by $\script{N}=\{1, ..., N\}$. To disseminate entity status information, the system regulates how frequently each entity updates its status. We denote this decision by an \emph{update rate} vector $\bv{\lambda}=(\lambda_1, ..., \lambda_N)$, where $\lambda_n$ is the update rate of entity $n$. We assume that the update process is Poisson  with rate $\lambda_n$. After generation, update packets are  relayed to a central unit for processing. To ensure that  each entity eventually  updates its status,  we require that $0<\lambda_{\min}\leq\lambda_n\leq\lambda_{\max}$ for all $n$. We denote by $\Lambda\triangleq\{\bv{\lambda}: \lambda_{\min}\leq\lambda_n\leq\lambda_{\max},\forall\, n\}$ the set of feasible rate vectors. 

%

\subsection{Single queue model}
In a practical system, different update information streams will share the limited system resources for delivery. We model this by the system shown in Fig. \ref{fig:queue}, where all update messages go through a single server queue.  This single server queueing model is the same as that adopted by prior work on AoI \cite{yates-realtime-12, yates-multiple-12, ephremides-aoi-13}.
\begin{figure}[cht]
\centering
\vspace{-.15in}
\includegraphics[height=0.6in, width=1.6in]{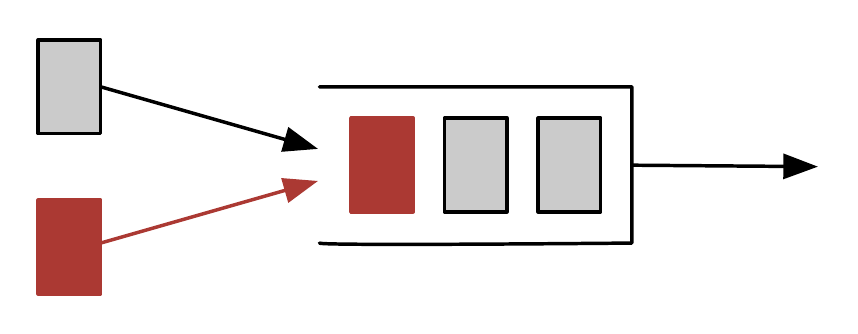}
\vspace{-.1in}
\caption{The update packet delivery process in a $2$-entity system. }\label{fig:queue}
\vspace{-.10in}
\end{figure}

In this queueing system, each new class-$n$ packet arrival to the queue represents the generation of a new entity $n$ update packet. 
Hence, class-$n$ packets arrive according to a Poisson process with rate $\lambda_n$. 
Departures from the queue, on the other hand, represent update packet reception events at their destinations. 
%
To model the heterogeneous resource requirements, e.g., entities may have update messages with different length distributions, we assume that each update packet from entity $n$ requires a random service time $X_n$, which is i.i.d.  with mean $x_n$ and  second moment $y_n$. 
For convenience, we denote $x_{\max}=\max_nx_n$ and $y_{\max}=\max_ny_n$.


This system is indeed equivalent to a multi-class $M/G/1$ queue. 
This model captures two key features of communication in networked systems: (i) resources are shared among different update streams, and (ii) queueing can occur during traffic delivery. Adopting this simple model allows us to focus on the age aspect of the update information.

\subsection{Age-of-Information}
The \emph{status age} of an entity at a particular time instance $t$ is defined to be the time elapsed since its latest received  update packet was generated. Fig. \ref{fig:peak-aoi-buffer} shows the status age (denoted by $\Delta_n(t)$) of entity $n$. The dropping points of $\Delta_n(t)$ are the time instances when an update packet is received, which resets the age value to a lower level (i.e., the current time minus the generation time of the new update packet).  
\begin{figure}[cht]
\centering
\vspace{-.15in}
\includegraphics[height=1.4in, width=3.0in]{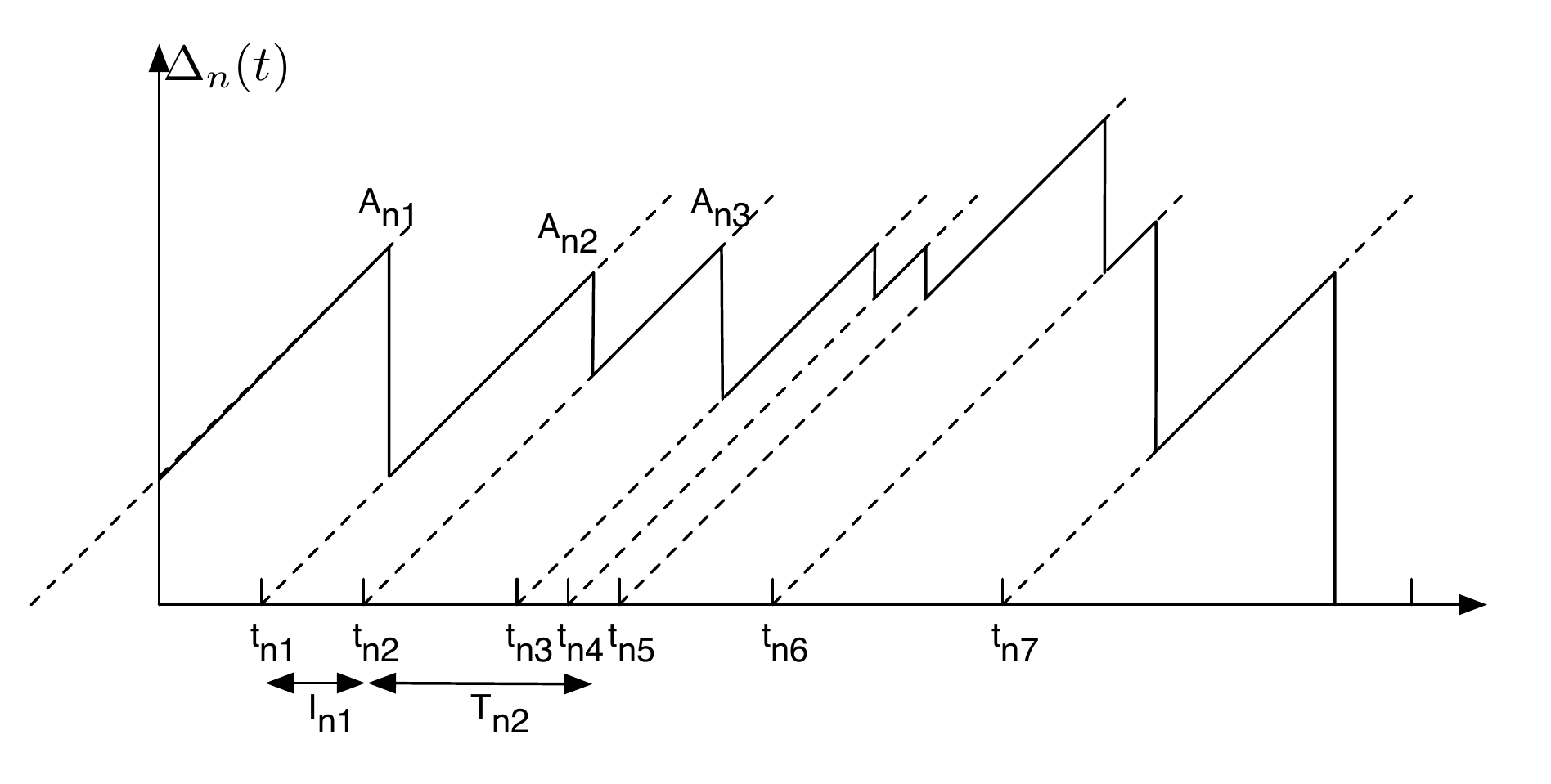}
\vspace{-.15in}
\caption{The evolution of the status age of  entity $n$ in the system. Here $A_{nk}$ denotes the $k$-th peak of age. }\label{fig:peak-aoi-buffer}
\vspace{-.15in}
\end{figure}

Given $\Delta_n(t)$, the average status age of entity $n$ is defined as: 
\begin{eqnarray}
A_{av, n} = \lim_{T\rightarrow\infty}\frac{1}{T}\int_{t=0}^T\Delta_n(t)dt. \label{eq:avg-aoi}
\end{eqnarray}
This metric is called the  \emph{age-of-information} (AoI) and was first considered in \cite{yates-realtime-12}. 
However, the AoI metric is hard to analyze. Moreover, in many systems, it is often the maximum status information delay that determines the   performance loss \cite{delayed-scheduling}. 
Thus,   we instead focus on the average \emph{peak} status age. 
Specifically, let $A_{nk}$ denote the $k$-th peak value of $\Delta_n(t)$ (See Fig. \ref{fig:peak-aoi-buffer}). The \emph{peak age-of-information} (PAoI) metric $A_n(\bv{\lambda})$ is defined as: 
\begin{eqnarray}
A_{p, n}(\bv{\lambda})\triangleq\lim_{K\rightarrow\infty}\frac{1}{K}\sum_{k=1}^{K}A_{nk}. 
\end{eqnarray}
Here we explicitly express PAoI as a function of the update rate vector $\bv{\lambda}$.  The PAoI metric was first considered in \cite{tony-aoi-isit-14} for the $M/M/1/1$ queue. It represents the maximum age of information before a new update is received. 
PAoI is closely related to the previously considered AoI metric $A_{av, n}$, e.g., in  \cite{yates-realtime-12} and \cite{yates-multiple-12}, but is much more tractable, thus facilitating its optimization.\footnote{Intuitively, PAoI provides an approximate upper bound for AoI, as it 
only considers the average sampled at the peak moments, whereas AoI in (\ref{eq:avg-aoi}) computes the time average  value of instantaneous age. }





\subsection{Optimizing update rates} 
We model the system performance  using a function of PAoI, to capture the fact that delay in status update often causes a proportional performance degradation.  
 Specifically, we consider the following 
 system cost function, i.e., 
\begin{eqnarray}
C_{\text{sys}}(\bv{A}(\bv{\lambda}))\triangleq\max_nC_n(A_{p, n}(\bv{\lambda})). 
\end{eqnarray}
Here  $C_n(A_n)$ is the cost of entity $n$ for having a PAoI of $A_n$, and it is assumed to be a quasiconvex and non-decreasing function in $A_n$ with $C_n(0)=0$. 
Our objective is to find an update rate vector $\bv{\lambda}\in\Lambda$ to minimize $C_{\text{sys}}(\bv{A}(\bv{\lambda}))$, i.e., minimize the maximum cost over all entities.  


\section{Computing  PAoI}\label{section:aoi}
In this section, we compute the PAoI for two different queueing models, i.e., the $M/G/1$ model and the $M/G/1/1$ model.  The two models differ in the way update packets are managed.  
In the $M/G/1$ model,  new packets are queued if the server is busy, while they are discarded in the $M/G/1/1$ model. This packet management scheme was also considered in \cite{tony-aoi-isit-14} for the $M/M/1$ model. 


\subsection{A general result for $G/G/1$ queues}
We first derive a useful result for general $G/G/1$ queues. 
 %
Denote by $I_n$ the inter-arrival time of entity $n$ packets, and let $W_{n}$ be the waiting time of  entity $n$ packets and let $T_n$ be the total sojourn time in the queue. We have the following proposition, in which  the superscript  ``$gg1$''  is used to indicate the relationship to  $G/G/1$ queues. 
\begin{proposition}
In a $G/G/1$ queue, the PAoI is given by: 
\begin{eqnarray}
A^{gg1}_{p, n} = \expect{I_n + T_n} = \expect{I_n + X_n+ W_n},\,\forall\,n.\quad\Diamond\label{eq:peak-aoi-q-gg1}
\end{eqnarray}
\end{proposition}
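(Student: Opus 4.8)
The plan is to work directly from the geometry of the sawtooth age curve $\Delta_n(t)$ shown in Fig.~\ref{fig:peak-aoi-buffer} and to express each peak as the sum of an inter-arrival time and a sojourn time. Index the class-$n$ packets by $k$, and let $t_k$ denote the generation (arrival) instant of the $k$-th such packet and $t_k'$ its reception (departure) instant. The curve $\Delta_n(t)$ rises at unit rate and is reset downward exactly at the class-$n$ reception instants $t_k'$; since the server is FIFO, class-$n$ packets depart in the order they arrive, so the $k$-th peak $A_{nk}$ is the value attained by $\Delta_n(t)$ immediately before the reception of packet $k$.

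The key step is to read off $A_{nk}$ from this observation. Just before $t_k'$ the freshest received class-$n$ packet is packet $k-1$, generated at $t_{k-1}$, so the age at that instant is
\begin{eqnarray}
A_{nk} = t_k' - t_{k-1}. \nonumber
\end{eqnarray}
Splitting this difference across the generation instant $t_k$ gives the decomposition
\begin{eqnarray}
A_{nk} = (t_k - t_{k-1}) + (t_k' - t_k) = I_{nk} + T_{nk}, \nonumber
\end{eqnarray}
where $I_{nk} = t_k - t_{k-1}$ is the inter-arrival time preceding packet $k$ and $T_{nk} = t_k' - t_k$ is its sojourn time.

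Averaging over $k$ and passing to the limit, the definition of PAoI then yields
\begin{eqnarray}
A^{gg1}_{p, n} = \lim_{K\rightarrow\infty}\frac{1}{K}\sum_{k=1}^{K}(I_{nk} + T_{nk}) = \expect{I_n + T_n}, \nonumber
\end{eqnarray}
where the last equality invokes the stationarity/ergodicity of the inter-arrival and sojourn sequences so that the Cesàro average converges to the common expectation. Writing the sojourn time as the sum of waiting and service, $T_n = W_n + X_n$, finally produces the stated second form $\expect{I_n + X_n + W_n}$.

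The main obstacle I anticipate is justifying the peak identity $A_{nk} = t_k' - t_{k-1}$ rigorously rather than merely pictorially: one must verify that class-$n$ receptions preserve arrival order (FIFO \emph{within} the class, even though other classes are interleaved in the single queue) and that each such reception is a genuine local maximum of $\Delta_n(t)$, so that no later class-$n$ packet is received before $t_k'$ that would pre-empt the drop. A secondary technical point is the interchange of limit and expectation in the final step, which requires a stationary-ergodic or renewal-reward assumption on the class-$n$ arrival and service processes to guarantee that the time-average peak equals $\expect{I_n + T_n}$.
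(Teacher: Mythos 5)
Your proof is correct and takes essentially the same route as the paper: the paper's own argument simply reads off from Fig.~\ref{fig:peak-aoi-buffer} that each peak equals the inter-arrival time plus the sojourn time of the next update packet, which is exactly your decomposition $A_{nk} = I_{nk} + T_{nk}$. Your write-up is a rigorous formalization (in-order departures within the class, the peak identity $A_{nk}=t_k'-t_{k-1}$, and the ergodic limit) of what the paper states pictorially.
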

\begin{proof}
This relation follows from Fig. \ref{fig:peak-aoi-buffer}, where we see that the PAoI is equal to the time from the generation of an update packet until the completion of the  next update packet,  plus their inter-arrival time. 
\end{proof}

Equation (\ref{eq:peak-aoi-q-gg1}) shows that PAoI is indeed the sum of the update interval and update packet delay. 
For a multi-class $G/G/1$ queue, we also know that the AoI is given by \cite{yates-realtime-12}: 
\begin{eqnarray} 
A^{gg1}_{av, n} = \lambda_n\expect{I_nT_n + \frac{I_n^2}{2}}.\label{eq:avg-aoi-gg1}
\end{eqnarray}
Comparing (\ref{eq:avg-aoi-gg1}) and (\ref{eq:peak-aoi-q-gg1}), we have: 
\begin{eqnarray}
\hspace{-.4in} && A^{gg1}_{av, n}  - A^{gg1}_{p, n} \label{eq:general-peak-avg}\\
\hspace{-.4in}&&=  \lambda_n\bigg(  \expect{I_n (T_n + \frac{I_n}{2}) }  - \expect{I_n}\expect{I_n+T_n}\bigg)\nonumber\\
\hspace{-.4in}&&=\lambda_n\bigg(\expect{I_nT_n} - \expect{I_n}\expect{T_n} +\frac{\expect{I_n^2}}{2} -\expect{I_n}^2 \bigg). \nonumber
\end{eqnarray}
Equation (\ref{eq:general-peak-avg}) provides a way for checking how close the two metrics are to each other. 
It can be seen that when $I_n$ is a constant, i.e., periodic arrival,  
$A^{gg1}_{av, n}  = A^{gg1}_{p, n} - \frac{\lambda I^2_n}{2}$. We will  see in the next subsection that  $A^{gg1}_{av, n}  \leq A^{gg1}_{p, n} $ in the single class $M/M/1$ case.  Thus, PAoI serves as an upper bound for  AoI. 
More generally, the following lemma shows that  PAoI approximates AoI  for general single-class $G/G/1$ queues. 
\begin{lemma}\label{lemma:aoi-comparison}
For a general single-class $G/G/1$ queue,  
\begin{eqnarray*}
A^{gg1}_{p} - \frac{3\lambda\expect{I^2}}{2} - \lambda\expect{I}^2\leq A^{gg1}_{av}  \leq A^{gg1}_{p}+\lambda \expect{I^2}/2. \Diamond
\end{eqnarray*}
%
\end{lemma}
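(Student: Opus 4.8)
The plan is to build directly on the already-derived identity (\ref{eq:general-peak-avg}), which in the single-class case reads
\[
A^{gg1}_{av} - A^{gg1}_{p} = \lambda\Big(\text{Cov}(I, T) + \tfrac{1}{2}\expect{I^2} - \expect{I}^2\Big),
\]
where I have recognized $\expect{IT} - \expect{I}\expect{T}$ as the covariance $\text{Cov}(I,T)$. Both desired inequalities then collapse to two-sided control of the single quantity $\text{Cov}(I,T)$: the upper bound $A^{gg1}_{av} \le A^{gg1}_{p} + \lambda\expect{I^2}/2$ follows once I show $\text{Cov}(I,T) \le 0$ (which lets me discard the $-\lambda\expect{I}^2$ term), and the lower bound follows once I show $\text{Cov}(I,T) \ge -\text{Var}(I)$, since then the bracket is at least $-\tfrac12\expect{I^2}$, comfortably above the claimed $-\tfrac32\expect{I^2}-\expect{I}^2$.

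To control the covariance I would exploit the queueing structure. Writing $T = W + X$ with $W$ the waiting time and $X$ the service time, and noting that $X$ is i.i.d.\ and independent of the inter-arrival time $I$ preceding the tagged packet, I obtain $\text{Cov}(I,T) = \text{Cov}(I,W)$. Lindley's recursion supplies the key representation: in a stable $G/G/1$ queue the stationary waiting time satisfies $W \overset{d}{=} \max(0,\, U - I)$, where $U$ is the unfinished work left by the previous customer, and $I$ is independent of the past and hence of $U$. Conditioned on $U$, the map $I \mapsto \max(0,U-I)$ is non-increasing, so Chebyshev's sum (FKG) inequality gives $\text{Cov}(I,W\mid U) \le 0$; averaging over $U$ (the between-group term vanishes because $I \perp U$ makes $\expect{I\mid U}$ constant) yields $\text{Cov}(I,T) \le 0$, which is exactly the upper bound.

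For the lower bound I would split $\max(0,U-I) = (U - I) + (I-U)^+$. The first summand contributes $\text{Cov}(I, U - I\mid U) = -\text{Var}(I)$ since $U$ is fixed and $I\perp U$, while $(I-U)^+$ is non-decreasing in $I$, so its conditional covariance with $I$ is nonnegative. Hence $\text{Cov}(I,W\mid U) \ge -\text{Var}(I)$, and averaging gives $\text{Cov}(I,T) \ge -\text{Var}(I) \ge -\expect{I^2}$. Substituting both bounds into the displayed identity delivers the lemma, in fact with the sharper symmetric envelope $|A^{gg1}_{av} - A^{gg1}_{p}| \le \lambda\expect{I^2}/2$, which is stronger than and therefore implies the stated inequalities.

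The hard part will not be the inequalities but the probabilistic bookkeeping that makes them applicable. I must argue carefully that, in the stationary regime, the inter-arrival time seen by the tagged packet is genuinely independent of the residual work $U$ it encounters, so that the single-variable correlation inequalities are legitimate after conditioning on $U$; this rests on stability ($\lambda\expect{X} < 1$) together with the renewal (i.i.d.) structure of the arrival and service streams of the $G/G/1$ queue. Once stationarity and this independence are established, the sign and size of $\text{Cov}(I,T)$ follow from elementary monotonicity, and the remainder is substitution into (\ref{eq:general-peak-avg}).
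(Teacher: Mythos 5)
Your proof is correct and yields a conclusion strictly stronger than the lemma, but the key bounding step is genuinely different from the paper's. The two arguments share the same skeleton: both start from (\ref{eq:general-peak-avg}), both use $T=W+X$ with $X$ independent of $I$ to reduce everything to the cross term $\expect{IW}-\expect{I}\expect{W}$, and both rest on the Lindley representation $W=(T_Q-I)^+$ (your $U$ is the paper's $T_Q$) with $T_Q$ independent of $I$. The paper then bounds $\expect{IW}$ by direct integral manipulation, discarding the term $-I^2\prob{T_Q\geq I}$ and estimating $\int_{t=I}^{\infty}tf(t)dt\leq\expect{T_Q}\leq\expect{(T_Q-I)^+}+\expect{I}$ for the upper bound, and using $T_Q\geq W$ plus two crude $-\expect{I^2}$ drops for the lower bound; this gives, in your language, $-2\expect{I^2}\leq \text{Cov}(I,W)\leq \expect{I}^2$, whence the asymmetric constants in the lemma. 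You instead condition on $U$, kill the between-group term via the law of total covariance (legitimate because $\expect{I\mid U}=\expect{I}$), and apply the one-dimensional Chebyshev/FKG association inequality to the monotone maps $I\mapsto (U-I)^+$ (non-increasing) and, after the split $(U-I)^+=(U-I)+(I-U)^+$, to $I\mapsto (I-U)^+$ (non-decreasing). This yields $-\text{Var}(I)\leq\text{Cov}(I,W)\leq 0$, strictly sharper than the paper's estimates, and hence the symmetric envelope $|A^{gg1}_{av}-A^{gg1}_{p}|\leq\lambda\expect{I^2}/2$, of which the stated lemma is a loose corollary. The paper's route is more elementary (bare integral comparisons, no correlation inequalities); yours buys sharper constants and a cleaner structural explanation: the inter-arrival time and the waiting time are negatively correlated, but not more negatively than $-\text{Var}(I)$. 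One presentational nit: you invoke Lindley's recursion as a distributional identity $W\overset{d}{=}(U-I)^+$, whereas the covariance computation needs the joint, pathwise identity $W_k=(U_{k-1}-I_k)^+$ together with $I_k$ independent of $U_{k-1}$; since Lindley's recursion holds sample-path-wise and $U_{k-1}$ is determined by the pre-$k$ history (i.i.d.\ services and renewal arrivals), this is exactly the bookkeeping you flagged, and it goes through.
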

\begin{proof}
See Appendix A. 
\end{proof}

%

\subsection{PAoI for multi-class $M/G/1$ queue}
Using (\ref{eq:peak-aoi-q-gg1}), we can compute the PAoI for  each entity in the $M/G/1$ queue.

\begin{proposition}
The PAoI for a multi-class $M/G/1$ system is given by: 
\begin{eqnarray}
A^{mg1}_{p, n}
&=& \frac{1}{\lambda_n} + x_n + \frac{\sum_j\lambda_jy_j  }{2(  1-\sum_j\lambda_jx_j)}. \quad\Diamond\label{eq:peak-aoi-q}
\end{eqnarray}
\end{proposition}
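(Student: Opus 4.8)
The plan is to apply the general $G/G/1$ formula from the previous proposition, namely $A^{mg1}_{p,n} = \expect{I_n + X_n + W_n}$, and evaluate each of the three terms for the multi-class $M/G/1$ queue. The first two terms are immediate: since class-$n$ arrivals form a Poisson process of rate $\lambda_n$, the inter-arrival time satisfies $\expect{I_n} = 1/\lambda_n$, and by assumption $\expect{X_n} = x_n$. These account for the first two summands in (\ref{eq:peak-aoi-q}), so the entire content of the proof reduces to identifying the expected waiting time $\expect{W_n}$ with the term $\frac{\sum_j \lambda_j y_j}{2(1-\sum_j \lambda_j x_j)}$.

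The key observation is that this target expression is exactly the classical Pollaczek--Khinchine mean waiting time for a multi-class (non-preemptive) $M/G/1$ queue. First I would recall that in an $M/G/1$ system with several Poisson input streams merged into a single stream, the superposition is itself Poisson with aggregate rate $\Lambda = \sum_j \lambda_j$, and the aggregate service time $X$ has moments obtained by mixing: $\expect{X} = \frac{1}{\Lambda}\sum_j \lambda_j x_j$ and $\expect{X^2} = \frac{1}{\Lambda}\sum_j \lambda_j y_j$. Feeding these into the standard P--K formula $\expect{W} = \frac{\Lambda \expect{X^2}}{2(1-\rho)}$ with utilization $\rho = \Lambda \expect{X} = \sum_j \lambda_j x_j$ yields precisely $\expect{W} = \frac{\sum_j \lambda_j y_j}{2(1-\sum_j \lambda_j x_j)}$. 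Because service is assumed independent of class and the queue discipline does not distinguish classes for the purpose of mean waiting time (by PASTA, an arriving packet of any class sees the same time-average residual work), the waiting time $\expect{W_n}$ is common to all classes and equals this aggregate value.

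The cleanest self-contained route, which I would favor over merely citing P--K, is to derive $\expect{W_n}$ directly: a tagged arriving packet must wait for the residual service of the packet in service plus the full service times of all packets already queued ahead of it. By PASTA, the expected residual work seen on arrival equals the time-average unfinished work, and the mean residual service time contributes $\frac{\Lambda \expect{X^2}}{2} = \frac{\sum_j \lambda_j y_j}{2}$; solving the resulting fixed-point relation $\expect{W} = \frac{\sum_j \lambda_j y_j}{2} + \rho\,\expect{W}$ for $\expect{W}$ gives the stated fraction. Combining the three evaluated terms completes the proof.

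The main obstacle is conceptual rather than computational: one must justify that $\expect{W_n}$ is independent of the class index $n$ and equals the aggregate waiting time. This hinges on the assumption that all classes share a single FCFS (or at least class-blind, non-preemptive) server, so that the total work a tagged arrival must clear does not depend on its own class, together with PASTA to equate the arrival-seen workload with its time average. I would state this assumption explicitly and note the implicit requirement $\sum_j \lambda_j x_j < 1$ for stability, under which the denominator is positive and the expression is well defined.
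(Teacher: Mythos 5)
Your proposal is correct and follows essentially the same route as the paper: the paper obtains (\ref{eq:peak-aoi-q}) by specializing the $G/G/1$ result (\ref{eq:peak-aoi-q-gg1}) with $\expect{I_n}=1/\lambda_n$, $\expect{X_n}=x_n$, and the Pollaczek--Khinchine waiting time, which it simply cites from \cite{bertsekas_datanet} along with the stability requirement $\rho=\sum_j\lambda_jx_j<1$. The only difference is that you additionally derive the multi-class P--K formula from first principles (PASTA, residual work, and the fixed-point relation), which is a correct and more self-contained filling-in of the step the paper delegates to a reference.
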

In (\ref{eq:peak-aoi-q}) we have used the P-K formula to compute the waiting time in the  $M/G/1$ queue \cite{bertsekas_datanet}. It is necessary to ensure $\rho\triangleq\sum_j\lambda_jx_j<1$, so that the queue is stable and the PAoI is finite. 

We can use (\ref{eq:peak-aoi-q}) to compute the PAoI for a single class $M/M/1$ system.  In particular, with $N=1$ and exponential service time of rate $\mu$,  (\ref{eq:peak-aoi-q}) becomes: 
\begin{eqnarray}
A^{mm1}_p = 
 \frac{1}{\mu} \bigg( 1+\frac{1}{\rho}  + \frac{\rho}{1-\rho}  \bigg). \label{eq:peak-aoi-q-mm1}
\end{eqnarray}
In contrast,  the  AoI for $M/M/1$ derived in \cite{yates-realtime-12}, is given by,
\begin{eqnarray}
A_{av}^{mm1} = \frac{1}{\mu} \bigg( 1+\frac{1}{\rho}  + \frac{\rho^2}{1-\rho}  \bigg). \label{yates}
\end{eqnarray}
Comparing (\ref{eq:peak-aoi-q}) to (\ref{yates}), we have, 
\begin{eqnarray}
A^{mm1}_p - A^{mm1}_{av} =  \frac{1}{\mu}\rho = \frac{\lambda}{\mu^2}. 
\end{eqnarray} 
Hence, PAoI is a close upper bound of  AoI for $M/M/1$ queues, yet is much more tractable. 

{\bf Conservation Laws for PAoI:}
From (\ref{eq:peak-aoi-q}), we also obtain the following \emph{conservation} formula for PAoI: 
\begin{eqnarray}
\sum_n\lambda_nA^{mg1}_{p, n} &=&N + \rho+NW. \label{eq:peak-aoi-conservation}
\end{eqnarray}
We also see from (\ref{eq:peak-aoi-q}) that: 
\begin{eqnarray*}
A^{mg1}_{p, n}  - \frac{1}{\lambda_n} - x_n = A^{mg1}_{p, m}  - \frac{1}{\lambda_m} - x_m,\,\forall\,n, m,  
\end{eqnarray*}
which implies, 
\begin{eqnarray}
\frac{1}{\lambda_n} - \frac{1}{\lambda_m} = ( A^{mg1}_{p, n}  - x_n  ) - ( A^{mg1}_{p, m}   - x_m ). \label{eq:relation-aq}
\end{eqnarray}
Hence, the relationship between $A^{mg1}_{p, n}$ and $A^{mg1}_{p, m}$ is completely determined by $\lambda_n$ and $\lambda_m$. For example, $\lambda_m>\lambda_n$ implies $ A^{mg1}_{p, n}  - x_n > A^{mg1}_{p, m}   - x_m$.

\subsection{PAoI for $M/G/1/1$ queue}
Let us now consider the case when the server does not queue incoming update packets. This can be viewed as the server is performing packet management  \cite{tony-aoi-isit-14}. 
\begin{proposition}
The PAoI for a multi-class $M/G/1/1$ is given by: 
\begin{eqnarray}
A^{mg11}_{p, n} =x_n +  \frac{1}{\lambda_n} + \frac{\sum_{k}\lambda_kx_k}{\lambda_n},\,\forall\,n. \quad\Diamond\label{eq:a-final}
\end{eqnarray}
\end{proposition}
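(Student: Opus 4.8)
The plan is to invoke Proposition 1, which already gives $A^{gg1}_{p,n} = \expect{I_n} + \expect{X_n + W_n}$, and to specialize it to the $M/G/1/1$ loss system. The first observation is that the $M/G/1/1$ queue has no buffer, so an accepted packet enters service immediately and waits zero time; hence $W_n = 0$ and $\expect{T_n} = \expect{X_n} = x_n$. The whole problem therefore reduces to computing $\expect{I_n}$, the mean inter-\emph{reception} time of class-$n$ packets. The key subtlety is that $I_n$ is the gap between two successive class-$n$ packets that are actually received (these are exactly the age-resetting events in Fig.~\ref{fig:peak-aoi-buffer}), which is strictly larger than $1/\lambda_n$ because packets arriving while the server is busy are dropped and never produce a peak.

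First I would compute the effective throughput of received class-$n$ packets with a renewal-reward (regeneration cycle) argument on the loss system. Let $\lambda \triangleq \sum_k\lambda_k$ denote the total Poisson arrival rate. Because the superposition of the class arrivals is Poisson with rate $\lambda$, each regeneration cycle consists of an idle period, exponential with mean $1/\lambda$ (the wait for the first arrival after a departure), followed by a single service. By Poisson splitting the accepted packet is class $k$ with probability $\lambda_k/\lambda$, so the expected service time in a cycle is $\bar{x} = \sum_k(\lambda_k/\lambda)x_k = \rho/\lambda$ with $\rho\triangleq\sum_k\lambda_kx_k$. Hence the mean cycle length is $1/\lambda + \rho/\lambda = (1+\rho)/\lambda$, and exactly one packet is served per cycle.

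Next I would split the per-cycle served packet across classes. By PASTA the arrival that starts a service is class $n$ with probability $\lambda_n/\lambda$, so the long-run rate of received class-$n$ packets is $\lambda_n^{\text{eff}} = \frac{\lambda_n}{\lambda}\cdot\frac{\lambda}{1+\rho} = \frac{\lambda_n}{1+\rho}$. By renewal-reward the mean inter-reception time is the reciprocal of this rate, giving $\expect{I_n} = (1+\rho)/\lambda_n = \frac{1}{\lambda_n} + \frac{\sum_k\lambda_kx_k}{\lambda_n}$. Substituting $\expect{I_n}$ together with $\expect{T_n}=x_n$ into (\ref{eq:peak-aoi-q-gg1}) yields the claimed expression (\ref{eq:a-final}).

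The main obstacle I anticipate is not the algebra but justifying the reduction cleanly: one must argue that the age-resetting events in Fig.~\ref{fig:peak-aoi-buffer} are precisely the received (accepted) packets, so that $I_n$ in Proposition 1 must be read as the inter-reception time rather than the raw inter-generation time $1/\lambda_n$, and that conditioning on acceptance leaves the i.i.d.\ service time unchanged so that $\expect{T_n}=x_n$ still holds. Once this interpretation is fixed, the loss-system throughput computation is routine.
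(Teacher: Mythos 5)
Your proof is correct, but it takes a genuinely different route from the paper. The paper's proof is a first-step (recursive) analysis: it introduces the auxiliary quantities $Z_{jn}$ (expected time from the start of a class-$j$ service until the next class-$n$ service completion), writes the peak age as $x_n + \frac{1}{\lambda} + \sum_j\frac{\lambda_j}{\lambda}Z_{jn}$, derives a system of linear equations for the $Z_{ji}$ from the memoryless structure, and solves it. You instead exploit the regenerative structure of the loss system directly: cycles of mean length $(1+\rho)/\lambda$, one served packet per cycle which is class $n$ with probability $\lambda_n/\lambda$, giving effective reception rate $\lambda_n/(1+\rho)$ and hence mean inter-reception time $(1+\rho)/\lambda_n$; combined with $W_n=0$ for accepted packets, this yields (\ref{eq:a-final}). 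Your route is shorter, makes transparent why no second moments of service appear (only mean cycle lengths matter), and produces the per-class throughput $\lambda_n/(1+\rho)$ as a useful byproduct; its cost is precisely the subtlety you flag: Proposition 1 as stated is for a queue where every generated packet is eventually received, so in the loss system you must re-derive the decomposition with $I_n$ read as the gap between consecutive \emph{accepted} class-$n$ packets and $T_n$ as the system time of an accepted packet (unbiased because acceptance depends only on the server state at arrival, independent of the packet's own service time) --- you identify and resolve exactly this point, whereas the paper's recursion sidesteps it by never invoking Proposition 1. One small correction: attributing the claim ``the arrival that starts a service is class $n$ with probability $\lambda_n/\lambda$'' to PASTA is a mislabel; it is the Poisson splitting property (class labels are i.i.d.\ and independent of arrival epochs), which you had already invoked correctly in the cycle computation.
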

\begin{proof}
Let $Z_{jn}$ be the expected time to complete service for a class $n$ packet starting from the moment  a class $j$ packet begins receiving service at the server. 
We have: 
\begin{eqnarray}
A^{mg11}_{p, n}  = x_n+\frac{1}{\lambda}+\sum_j\frac{\lambda_j}{\lambda}Z_{jn}. \label{eq:a-def-gen}
\end{eqnarray}
Equation (\ref{eq:a-def-gen}) can be understood as follows. 
Since the status age of an entity decreases only when its next update packet is served,  the (expected)  peak age can be broken down into three components (see Fig. \ref{fig:peak-aoi-bufferless}). The first component $x_n$ in (\ref{eq:a-def-gen}) is the processing time of the current update packet. The second component is $\frac{1}{\lambda}$, the expected time needed  to get the next arrival (because packets arriving during busy periods are dropped). Then, the third component is the expected time needed until the completion of the next class $n$ update packet (the third term), which is $Z_{jn}$ if the next arrival turns out to be a class $j$ packet, resulting in an average time of $\sum_j\frac{\lambda_j}{\lambda}Z_{jn}$. 
\begin{figure}[cht]
\vspace{-.12in}
\centering
\vspace{-.12in}
\includegraphics[height=1.5in, width=2.7in]{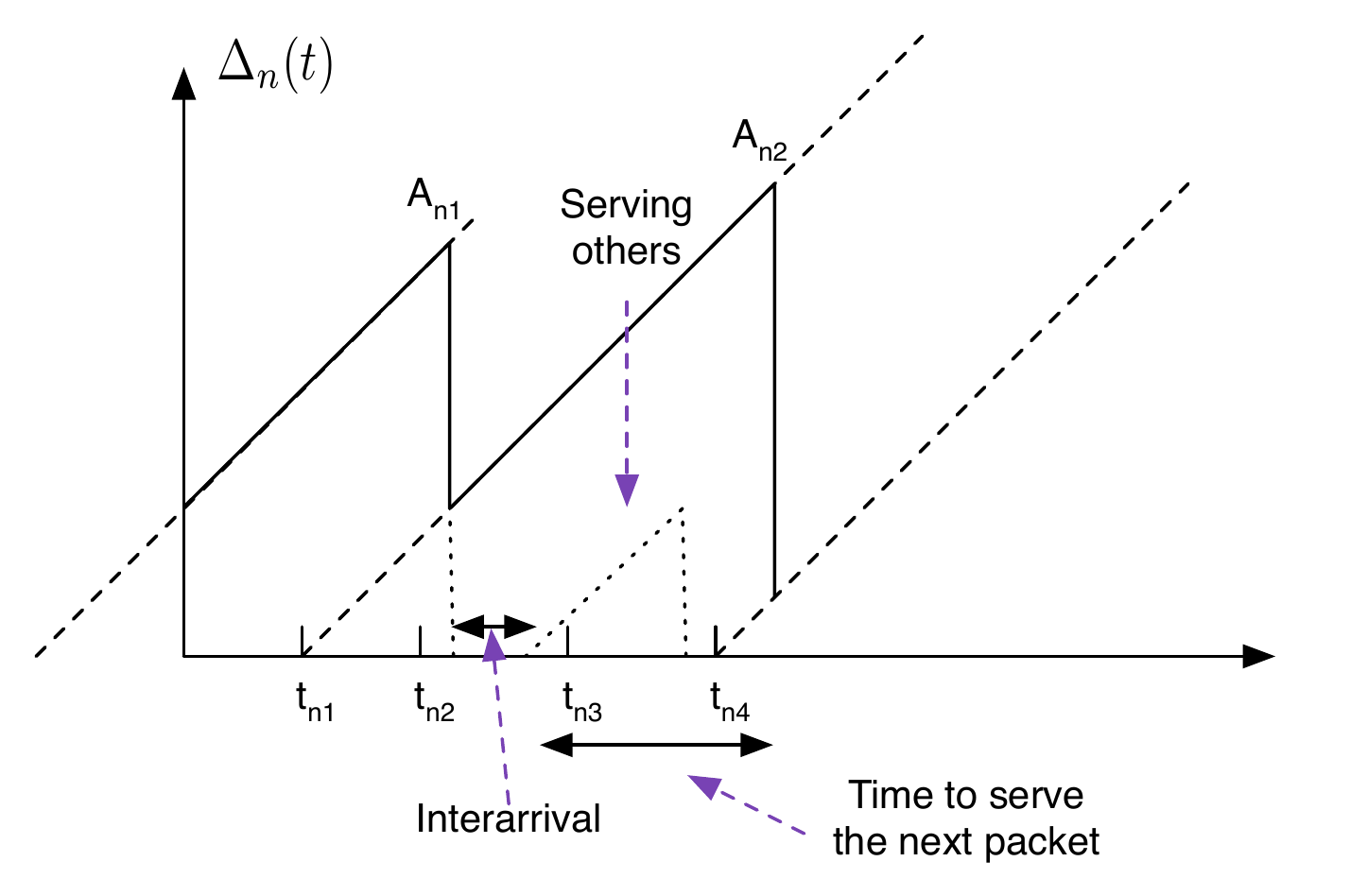}
\vspace{-.1in}
\caption{Evolution of entity $n$'s status age in the $M/G/1/1$ system. }\label{fig:peak-aoi-bufferless}
\vspace{-.1in}
\end{figure}

We now solve for $A^{mg11}_{p, n}$. Note that for any $i\neq n$, we have: 
\begin{eqnarray}
Z_{ji} = x_j +\frac{1}{\lambda} + \sum_k\frac{\lambda_k}{\lambda}Z_{ki}.\label{eq:z-def-gen}
\end{eqnarray}
Thus,  $Z_{ji}-Z_{ki}= x_j-x_k$ for any $j, k\neq i$. Plugging this into (\ref{eq:z-def-gen}) and using the fact that $Z_{ii}=x_i$, we obtain: 
\begin{eqnarray*}
Z_{ji} = x_j +\frac{1}{\lambda} + \frac{\lambda_i}{\lambda}x_i+\sum_{k\neq i}\frac{\lambda_k}{\lambda}[Z_{ji} +x_k-x_j].\label{eq:z-def-gen2}
\end{eqnarray*}
Therefore, 
\begin{eqnarray}
Z_{ji} = x_i + \frac{1}{\lambda_i} +x_j +\sum_{k\neq i}\frac{\lambda_kx_k}{\lambda_i} = x_j +\frac{1}{\lambda_i}  +\sum_{k}\frac{\lambda_kx_k}{\lambda_i}.  \nonumber 
\end{eqnarray}
Using this in (\ref{eq:a-def-gen}), we get: 
\begin{eqnarray}
\hspace{-.3in}&&A^{mg11}_{p, n} 
=x_n + \frac{1}{\lambda} + \frac{\lambda_n}{\lambda}x_n  +  \sum_{j\neq n}   \frac{\lambda_j}{\lambda} \bigg[ x_j + \frac{1}{\lambda_n} + \sum_{k}\frac{\lambda_kx_k}{\lambda_n} \bigg] \nonumber\\
\hspace{-.3in}&&\qquad\quad=2x_n + \frac{1}{\lambda_n}  + \frac{\sum_{k\neq i}\lambda_kx_k}{\lambda_n}. \nonumber
\end{eqnarray}
Rearranging the terms in the above gives (\ref{eq:a-final}). 
\end{proof}

It is interesting to note that  (\ref{eq:a-final}) does not require the second moment of service time, which is generally required in the analysis of $M/G/1$ queues.  This can be attributed to the fact that in the $M/G/1/1$ system, packets are never held in the buffer, thus the residual service time does  not play a role in the computation of  (\ref{eq:a-final}).
Also, when $N=1$,  (\ref{eq:a-final}) recovers the  result  from \cite{tony-aoi-isit-14} for the $M/M/1/1$ queue. 
It is also interesting to see in (\ref{eq:a-final}) that due to packet discard, the constraint $\rho<1$ can actually be violated. 

We  note from (\ref{eq:a-final}) that, for any achievable PAoI vector $\bv{A}^{mg11}_p=(A^{mg11}_{p1}, ..., A^{mg11}_{pN})$, 
\begin{eqnarray}
\lambda_nA^{mg11}_{p, n}- \lambda_nx_n  & = &  \sum_{k}\lambda_kx_k+1. 
\end{eqnarray}
Since the right-hand-side (RHS) does not depend on $n$, this implies that: 
\begin{eqnarray}
\lambda_nA^{mg11}_{p, n}- \lambda_nx_n & = & \lambda_mA^{mg11}_{p, m} - \lambda_mx_m. \label{eq:relation-mg11-case}
\end{eqnarray}
Similar to (\ref{eq:relation-aq}), the relationship of $A^{mg11}_{p, n}$ and $A^{mg11}_{p, m}$ is uniquely determined by $\lambda_n$ and $\lambda_m$. 
We also have the following conservation formula: 
\begin{eqnarray}
\sum_n\lambda_nA^{mg11}_{p, n}   & = & N + (N+1) \rho. \label{eq:peak-aoi-conservation-bufferless}
\end{eqnarray}
Comparing (\ref{eq:peak-aoi-q}) and (\ref{eq:a-final}), we have:  
\begin{eqnarray}
A^{mg11}_{p, n} -A^{mg1}_{p, n}  &=&  \frac{\sum_{k}\lambda_kx_k}{\lambda_n}- \frac{\sum_j\lambda_jy_j }{2(  1-\sum_j\lambda_jx_j)}. \label{eq:peak-diff}
\end{eqnarray}
This shows that $A^{mg11}_{p, n}$ can be much smaller than $A^{mg1}_{p, n}$ when the update rates are large, i.e., when $\rho$ is close to $1$. Thus, even though packets can be dropped in the $M/G/1/1$ system, such dropping may actually result in PAoI reduction as queueing delay is reduced.  
%


\vspace{-.1in}
\section{PAoI optimization}\label{section:utility}
Having computed the PAoI for the two cases,  we now consider the problem of  optimizing the update rates, i.e., minimize $C_{\text{sys}}(\bv{\lambda})$. This formulation enables us to provide differentiated service to different applications. 
In the following, we  start with the $M/G/1/1$ queue and then consider the $M/G/1$ queue. 

\subsection{$M/G/1/1$ optimization}
In this case, the utility optimization problem takes the following form: 
\begin{eqnarray}
\min_{\bv{\lambda}}: && C_{\text{sys}}(\bv{\lambda})=\max_nC_n(x_n + \frac{1+\sum_k\lambda_kx_k}{\lambda_n})\label{eq:obj-mg11}\\
\text{s.t.} 
&&\bv{\lambda}\in\Lambda. \nonumber
\end{eqnarray}
The following lemma shows that although (\ref{eq:obj-mg11}) is not convex, it can still be efficiently solved. 
\begin{lemma}
Problem (\ref{eq:obj-mg11}) is a quasiconvex program. $\Diamond$
\end{lemma}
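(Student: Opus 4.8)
The plan is to verify directly the two defining ingredients of a quasiconvex program: a convex feasible region and a quasiconvex objective. The feasible set $\Lambda=\{\bv{\lambda}:\lambda_{\min}\leq\lambda_n\leq\lambda_{\max},\forall\,n\}$ is a box, hence convex, so essentially all the work lies in showing that the objective $C_{\text{sys}}(\bv{\lambda})=\max_nC_n(A^{mg11}_{p,n}(\bv{\lambda}))$ is quasiconvex on $\Lambda$, where $A^{mg11}_{p,n}(\bv{\lambda})=x_n+\frac{1+\sum_k\lambda_kx_k}{\lambda_n}$ by (\ref{eq:a-final}). I would build this up compositionally, in three layers.

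First I would establish that each inner function $A^{mg11}_{p,n}$ is quasiconvex. The key algebraic observation is that, after clearing the denominator, $A^{mg11}_{p,n}(\bv{\lambda})=\frac{2x_n\lambda_n+1+\sum_{k\neq n}\lambda_kx_k}{\lambda_n}$ is a ratio of two affine functions of $\bv{\lambda}$ whose denominator $\lambda_n$ is strictly positive on $\Lambda$, since $\lambda_n\geq\lambda_{\min}>0$. Such linear-fractional functions are quasilinear, and in particular quasiconvex, on the half-space where the denominator is positive; I would confirm this by checking that each sublevel set $\{\bv{\lambda}:A^{mg11}_{p,n}(\bv{\lambda})\leq\alpha\}$ reduces, after multiplying through by $\lambda_n>0$, to a single linear inequality in $\bv{\lambda}$ and is therefore convex.

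Second, I would compose with the cost $C_n$. Because $C_n$ is non-decreasing, each sublevel set $\{\bv{\lambda}:C_n(A^{mg11}_{p,n}(\bv{\lambda}))\leq\alpha\}$ coincides with a sublevel set $\{\bv{\lambda}:A^{mg11}_{p,n}(\bv{\lambda})\leq\beta\}$ of $A^{mg11}_{p,n}$ for the appropriate threshold $\beta$, which is convex by the previous step; hence $C_n\circ A^{mg11}_{p,n}$ is quasiconvex. Note that only monotonicity of $C_n$ is needed here, not its quasiconvexity. Finally, since the sublevel set of a pointwise maximum equals the intersection of the individual sublevel sets, $\{\bv{\lambda}:C_{\text{sys}}(\bv{\lambda})\leq\alpha\}=\bigcap_n\{\bv{\lambda}:C_n(A^{mg11}_{p,n}(\bv{\lambda}))\leq\alpha\}$ is an intersection of convex sets and therefore convex, so $C_{\text{sys}}$ is quasiconvex and the problem is a quasiconvex program.

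The main obstacle I anticipate is the first layer: correctly recognizing $A^{mg11}_{p,n}$ as a linear-fractional function and justifying its quasiconvexity specifically on $\Lambda$. The subtlety is that the numerator $1+\sum_k\lambda_kx_k$ contains the very variable $\lambda_n$ that also appears in the denominator, so one must be careful to keep everything affine and to restrict attention to the region $\lambda_n>0$, where the ratio is quasiconvex; the box constraint in $\Lambda$ is exactly what guarantees this. Once quasiconvexity of each $A^{mg11}_{p,n}$ is pinned down, the composition and maximum steps are routine applications of the sublevel-set characterization of quasiconvexity.
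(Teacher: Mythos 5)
Your proof is correct and follows essentially the same route as the paper: identify $A^{mg11}_{p,n}(\bv{\lambda})$ as a linear-fractional (hence quasiconvex) function on the box $\Lambda$, compose with the nondecreasing cost $C_n$, and use the fact that a pointwise maximum preserves quasiconvexity over the convex feasible set $\Lambda$ --- you simply verify via sublevel sets what the paper cites from standard convex-optimization references. Your side observation that only monotonicity of $C_n$ is needed (not its quasiconvexity) is correct and is a mild sharpening rather than a different argument, since a nondecreasing function of a scalar is automatically quasiconvex.
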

\begin{proof}
First, we see that  $A_n= x_n + \frac{1+\sum_k\lambda_kx_k}{\lambda_n}$  is a linear-fractional function in $\bv{\lambda}$. Since each $C_n(A_n)$ function is quasiconvex and nondecreasing in $A_n$,   $C_n(A_n)$ is  quasiconvex in $\bv{\lambda}$. As the $\max$ operator preserves quasiconvexity, we conclude that $C_{\text{sys}}(\bv{\lambda})$ is also quasiconvex in $\bv{\lambda}$ and (\ref{eq:obj-mg11}) is a quasiconvex program over the convex set $\Lambda$ \cite{boydconvexopt}.  
\end{proof}

Therefore, the optimization problem (\ref{eq:obj-mg11}) can be solved by the bisection procedure described below \cite{boydconvexopt}.  
Define 
\begin{eqnarray}
\phi_t \triangleq \left\{\begin{array}{cc}
0 & C_{\text{sys}}(\bv{\lambda})\leq t\\
\infty & \text{else}. 
\end{array}\right. 
\end{eqnarray}
We see that $C_{\text{sys}}(\bv{\lambda})\leq t$ is equivalent to $\phi_t\leq0$, i.e.,  if   $\bv{\lambda}$ ensures $\phi_t\leq0$, it also ensures   $C_{\text{sys}}(\bv{\lambda})\leq t$.  
Hence, we then use the following bi-section algorithm to solve (\ref{eq:obj-mg11}).

\underline{$\mathtt{Bisection}$:} Set $l=0$ and $u=\max_nC_n(x_{\max} + \frac{1+N\lambda_{\max}x_{\max}}{\lambda_{\min}})$. Fix $\epsilon>0$. Then, repeat the following until $u-l\leq\epsilon$: 
\begin{enumerate}
\item Set $t=(l+u)/2$
\item Solve the following problem: 
\begin{eqnarray}
\min:\,\,1, \quad\text{s.t.} \,\, \,\phi_t\leq 0, \,\, \bv{\lambda}\in\Lambda. \label{eq:bisection}
\end{eqnarray}
\item If (\ref{eq:bisection}) is feasible, set $u=t$; otherwise set $l=t$. 
$\Diamond$
\end{enumerate}


%

%
\vspace{4pt}
Using (\ref{eq:relation-mg11-case}),  we also obtain the following properties of the optimal solution $\bv{\lambda}^*$ to  (\ref{eq:obj-mg11}), where $\succeq$  denotes entrywise larger. 
\begin{lemma}\label{lemma:mg11-pro}
Let $\bv{\lambda}^*$ be   an optimal solution of (\ref{eq:obj-mg11}). Then, (i) $\exists$ $\hat{\bv{\lambda}}\succeq\bv{\lambda}^*$ such that $\max_n\hat{\lambda}_n =\lambda_{\max}$ and $C_{\text{sys}}(\hat{\bv{\lambda}}) = C_{\text{sys}}(\bv{\lambda}^*)$,  and (ii) if all entities are identical,  i.e., $x_1=x_2$ and $C_n(A)=C_m(A)$, then $\lambda_n=\lambda_{\max}, \,\forall\,n$ is an optimal solution.  $\Diamond$
\vspace{4pt}
\end{lemma}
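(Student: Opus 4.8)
The plan is to exploit the closed form (\ref{eq:a-final}) together with a uniform-scaling argument, which is the single observation driving both parts. The key claim is that scaling every update rate by a common factor $c\geq 1$ can only decrease each entity's PAoI. Concretely, writing $\rho=\sum_k\lambda_kx_k$ and evaluating (\ref{eq:a-final}) at the scaled point $c\bv{\lambda}$,
\begin{eqnarray*}
A^{mg11}_{p, n}(c\bv{\lambda}) &=& x_n + \frac{1+c\rho}{c\lambda_n} = x_n + \frac{1}{c\lambda_n}+\frac{\rho}{\lambda_n}.
\end{eqnarray*}
Only the term $1/(c\lambda_n)$ depends on $c$ and it is decreasing in $c$, so $A^{mg11}_{p, n}(c\bv{\lambda})\leq A^{mg11}_{p, n}(\bv{\lambda})$ for every $n$ and every $c\geq 1$. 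Since each $C_n$ is non-decreasing, this gives $C_{\text{sys}}(c\bv{\lambda})\leq C_{\text{sys}}(\bv{\lambda})$.

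For part (i), I would combine this monotonicity with optimality. Take $c^*=\lambda_{\max}/\max_n\lambda_n^*\geq 1$ and set $\hat{\bv{\lambda}}=c^*\bv{\lambda}^*$. By construction $\hat{\bv{\lambda}}\succeq\bv{\lambda}^*$, every entry remains in $[\lambda_{\min},\lambda_{\max}]$ (since $c^*\lambda_n^*\geq\lambda_n^*\geq\lambda_{\min}$ and the largest entry equals $\lambda_{\max}$), so $\hat{\bv{\lambda}}\in\Lambda$ and $\max_n\hat{\lambda}_n=\lambda_{\max}$. The scaling inequality gives $C_{\text{sys}}(\hat{\bv{\lambda}})\leq C_{\text{sys}}(\bv{\lambda}^*)$, while optimality of $\bv{\lambda}^*$ forces the reverse inequality because $\hat{\bv{\lambda}}$ is feasible; hence the two costs coincide.

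For part (ii), under $x_n\equiv x$ and $C_n\equiv C$ the objective reduces to minimizing $\max_n A^{mg11}_{p, n}=x+\frac{1+x\sigma}{\mu}$, where $\sigma=\sum_n\lambda_n$ and $\mu=\min_n\lambda_n$, because $C$ is non-decreasing so $C_{\text{sys}}(\bv{\lambda})=C(\max_n A^{mg11}_{p, n})$. I would then bound $\frac{1+x\sigma}{\mu}=\frac{1}{\mu}+\frac{x\sigma}{\mu}\geq \frac{1}{\lambda_{\max}}+xN$, using $\mu\leq\lambda_{\max}$ for the first term and $\sigma\geq N\mu$ for the second. Both inequalities are tight simultaneously precisely when $\lambda_n=\lambda_{\max}$ for all $n$, which therefore attains the minimum and is an optimal solution.

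The step I expect to be the main obstacle is establishing the uniform-scaling monotonicity cleanly, since a first glance at (\ref{eq:a-final}) suggests a tension: raising any single $\lambda_m$ with $m\neq n$ enlarges the cross term $\sum_k\lambda_kx_k/\lambda_n$ and thus inflates $A^{mg11}_{p, n}$. The resolution is that when all rates are scaled together, the numerator and denominator both grow linearly in $c$, so the $\rho$-driven cross term is scale-invariant and the only net effect is the shrinking of $1/(c\lambda_n)$. Once this is pinned down, both parts follow from short optimality and extremal-point arguments.
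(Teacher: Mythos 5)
Your proof is correct. For part (i) it takes essentially the same route as the paper: the paper also constructs $\hat{\bv{\lambda}}$ by holding the ratios between rates fixed and proportionally increasing all of them until the largest hits $\lambda_{\max}$ (its index $n_0^*=\arg\min_n(A_n^*-x_n)$ is, by (\ref{eq:relation-mg11-case}), exactly the index of $\max_n\lambda_n^*$), and then invokes monotonicity of each $C_n$ plus optimality of $\bv{\lambda}^*$. Your explicit verification that $A^{mg11}_{p,n}(c\bv{\lambda})=x_n+\frac{1}{c\lambda_n}+\frac{\rho}{\lambda_n}$, so that the cross term is scale-invariant and only $1/(c\lambda_n)$ shrinks, is precisely the computation the paper leaves implicit when it asserts $\hat{\bv{A}}\preceq\bv{A}^*$ ``from (\ref{eq:relation-mg11-case}) and (\ref{eq:a-final}),'' so you have actually filled in the step the paper glosses over. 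For part (ii) your route is genuinely different and stronger: the paper simply asserts that identical entities force identical rates and then concludes $\lambda_n=\lambda_{\max}$, a symmetry claim it does not justify (symmetric problems can in principle admit asymmetric optima, so this assertion deserves an argument). You instead bound $\max_n A^{mg11}_{p,n}=x+\frac{1}{\mu}+\frac{x\sigma}{\mu}\geq x+\frac{1}{\lambda_{\max}}+xN$ with $\mu=\min_n\lambda_n$, $\sigma=\sum_n\lambda_n$, and observe the bound is attained exactly at the all-$\lambda_{\max}$ point; this is self-contained, avoids the symmetry assertion entirely, and as a bonus shows that this point is the unique minimizer of the worst-case PAoI. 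The paper's phrasing is shorter; your argument is the one that actually closes the logical gap.
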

\begin{proof} First, note from (\ref{eq:relation-mg11-case}) that for a given  $\bv{\lambda}$, if we let $n_0=\arg\min_n (A_n-x_n)$, then we can express each $\lambda_m$ as: 
\begin{eqnarray}
\lambda_m  = \lambda_{n_0}\frac{A_{n_0}- x_{n_0}}{A_m - x_m},\,\,\forall\,m.  \label{eq:rate-A}
\end{eqnarray}
Consider an optimal solution $\bv{\lambda}^*$ and let $\bv{A}^*$ be the corresponding PAoI vector. We construct a $\hat{\bv{\lambda}}$ as follows. Denote $n^*_0=\arg\min_n (A^*_n-x_n)$. Then, we keep the ratio between any pair of rates fixed and proportionally increase all $\lambda_n$ until $\lambda_{n^*_0}=\lambda_{\max}$. 
From (\ref{eq:relation-mg11-case}) and (\ref{eq:a-final}), we see that $\hat{\bv{A}}\preceq\bv{A}^*$. Since each $C_n(A)$ is nondecreasing, we have $C_n(\hat{A}_n)\leq C_n(A_n^*)$, which implies $C_{\text{sys}}(\hat{\bv{\lambda}}) = C_{\text{sys}}(\bv{\lambda}^*)$. 
This proves (i). 

When all entities are identical, the rates must be the same for all entities. Hence, $\lambda_n=\lambda_{\max}$ for all $n$. 
\end{proof}

\subsection{$M/G/1$ optimization} 
In $M/G/1$, the optimization problem becomes: 
\begin{eqnarray}
\hspace{-.2in}\min: && C_{\text{sys}}(\bv{\lambda})=\max_nC_n(A^{mg1}_{p, n})\label{eq:obj-mg1}\\
\hspace{-.2in}\text{s.t.}  && \frac{1}{\lambda_n} + x_n + \frac{\sum_j\lambda_jy_j  }{2(  1-\sum_j\lambda_jx_j)}=A^{mg1}_{p, n},\,\forall\,n \label{eq:cond-mg1}\\
\hspace{-.2in}&&\sum_n\lambda_nx_n\leq1, \,\lambda_n>0. \nonumber 
\end{eqnarray}
Different from problem (\ref{eq:obj-mg11}), here the LHS in constraint (\ref{eq:cond-mg1}) is a sum of two linear-fractional functions, which may not be quasiconvex any more. 
Thus, to proceed, we approximate $A_n(\bv{\lambda})$ with another function $B_n(\bv{\lambda})$ defined as: 
\begin{eqnarray} 
B_n(\bv{\lambda})\triangleq 2\max\bigg(\frac{1}{\lambda_n} + x_n, \frac{\sum_j\lambda_jy_j }{2(  1-\sum_j\lambda_jx_j)}\bigg).  
\end{eqnarray} 
That is, we solve problem (\ref{eq:obj-mg1}) with $A^{mg1}_{p, n}$  replaced by $B_n$. The main advantage of introducing $B_n(\bv{\lambda})$ is that it is quasiconvex in $\bv{\lambda}$. Hence, the function  $C_{\text{sys}}(\bv{B}(\bv{\lambda}))\triangleq\max_nC_n(B_n)$ can  be efficiently minimized by the $\mathtt{Bisection}$ algorithm. 

We now look at the performance of the approximation. Define $\beta_n$ the maximum increasing slope of  $C_n(A)$, i.e., 
\begin{eqnarray*}
\beta_n\triangleq \inf\{\beta: |C_n(A_1) - C_n(A_2)|\leq \beta|A_1-A_2|, \,\forall\, A_1, A_2 \}. 
\end{eqnarray*}
We then have the following lemma, where $\bv{\lambda}^*_B$  is the optimal solution of the approximation program. 
%
\begin{lemma}\label{lemma:approx}
Let $\bv{\lambda}^*$ be an optimal solution of the original problem (\ref{eq:obj-mg1})  and denote $\bv{A}^*$ the resulting PAoI. Then, 
\begin{eqnarray}
C_{\text{sys}}(\bv{\lambda}^*)\leq C_{\text{sys}}(\bv{\lambda}^*_B) \leq C_{\text{sys}}(\bv{\lambda}^*) + \max_n\beta_nA^*_n. \quad \Diamond\label{eq:c-sys-approx}
\end{eqnarray}
\end{lemma}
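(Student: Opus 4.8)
The plan is to sandwich the true cost at the surrogate optimum between the two bounds by routing the estimate through the surrogate objective. Write $P_n(\bv{\lambda})\triangleq\frac{1}{\lambda_n}+x_n$ and $Q_n(\bv{\lambda})\triangleq\frac{\sum_j\lambda_jy_j}{2(1-\sum_j\lambda_jx_j)}$ for the two nonnegative terms of the PAoI, so that $A_n(\bv{\lambda})=P_n+Q_n$ and $B_n(\bv{\lambda})=2\max(P_n,Q_n)$. The two elementary facts I would record first are $P_n+Q_n\leq 2\max(P_n,Q_n)$ and $2\max(P_n,Q_n)-(P_n+Q_n)=|P_n-Q_n|\leq P_n+Q_n$, both valid because $P_n,Q_n\geq 0$. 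These say that, for every $\bv{\lambda}$ and every $n$, $A_n(\bv{\lambda})\leq B_n(\bv{\lambda})$ and $B_n(\bv{\lambda})-A_n(\bv{\lambda})\leq A_n(\bv{\lambda})$. The left inequality of (\ref{eq:c-sys-approx}) is then immediate: since $\bv{\lambda}^*$ minimizes the true objective $C_{\text{sys}}(\cdot)=\max_nC_n(A_n(\cdot))$, we have $C_{\text{sys}}(\bv{\lambda}^*)\leq C_{\text{sys}}(\bv{\lambda}^*_B)$.

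For the right inequality I would pass through the surrogate objective $C_{\text{sys}}(\bv{B}(\bv{\lambda}))=\max_nC_n(B_n(\bv{\lambda}))$. Since each $C_n$ is nondecreasing and $A_n\leq B_n$, the true objective is dominated pointwise by the surrogate, so $C_{\text{sys}}(\bv{\lambda}^*_B)\leq C_{\text{sys}}(\bv{B}(\bv{\lambda}^*_B))$. Because $\bv{\lambda}^*_B$ minimizes the surrogate, $C_{\text{sys}}(\bv{B}(\bv{\lambda}^*_B))\leq C_{\text{sys}}(\bv{B}(\bv{\lambda}^*))$. It therefore remains to bound the surrogate evaluated at the true optimum, $C_{\text{sys}}(\bv{B}(\bv{\lambda}^*))$, by $C_{\text{sys}}(\bv{\lambda}^*)+\max_n\beta_nA^*_n$.

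This last step is where the Lipschitz slopes enter. For each $n$, monotonicity and the definition of $\beta_n$ give $C_n(B_n(\bv{\lambda}^*))-C_n(A^*_n)\leq\beta_n\,(B_n(\bv{\lambda}^*)-A^*_n)\leq\beta_nA^*_n$, the last inequality being $B_n-A_n\leq A_n$ evaluated at $\bv{\lambda}^*$. Taking the maximum over $n$ and splitting it via $\max_n(a_n+b_n)\leq\max_na_n+\max_nb_n$ yields $C_{\text{sys}}(\bv{B}(\bv{\lambda}^*))\leq\max_nC_n(A^*_n)+\max_n\beta_nA^*_n=C_{\text{sys}}(\bv{\lambda}^*)+\max_n\beta_nA^*_n$. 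Chaining the three inequalities from this and the previous paragraph delivers the right-hand bound.

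The argument is mostly bookkeeping, so there is no single deep obstacle; the care lies in getting the comparison points right. In particular, the error term must be evaluated at $\bv{\lambda}^*$ rather than $\bv{\lambda}^*_B$ — the whole estimate hinges on crossing back from the surrogate optimum to the true optimum using the optimality of $\bv{\lambda}^*_B$ for the monotone surrogate — and one must relax the sharper slack $|P_n-Q_n|$ to $A^*_n$ and split the maximum additively rather than combining the two terms, or the stated bound will not come out exactly.
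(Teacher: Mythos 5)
Your proof is correct and follows essentially the same route as the paper's: establish $A_n(\bv{\lambda})\leq B_n(\bv{\lambda})\leq 2A_n(\bv{\lambda})$, chain through the surrogate optimum via $C_{\text{sys}}(\bv{A}(\bv{\lambda}^*_B))\leq C_{\text{sys}}(\bv{B}(\bv{\lambda}^*_B))\leq C_{\text{sys}}(\bv{B}(\bv{\lambda}^*))$, and finish with the Lipschitz bound $\beta_n$ and the max. The only cosmetic differences are that you derive the sandwich inequality explicitly from the $\max$ identity $2\max(P,Q)=P+Q+|P-Q|$ and apply the Lipschitz step to $B_n(\bv{\lambda}^*)$ directly rather than first relaxing to $2A_n(\bv{\lambda}^*)$, which changes nothing of substance.
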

\begin{proof}
See Appendix B. 
\end{proof}

When each $C_n(A)$ is linear in $A$, i.e., $C_n(A)=w_nA$, we have $\beta_n=w_n$. In this case, we can conclude from (\ref{eq:approx-ineq}) that $C_{\text{sys}}(\bv{\lambda}^*)\leq C_{\text{sys}}(\bv{\lambda}^*_B) \leq 2C_{\text{sys}}(\bv{\lambda}^*)$. 

%
%
%

\section{Numerical Results}\label{section:num}
We present a simple numerical example with $N=2$ entities,   and constant service times $x_1=1$ and $x_2=3$.  $\lambda_{\min}=0.01$ and $\lambda_{\max}=10$. The cost functions are given by $C_1(A_1)=4A_1^2$ and $C_2(A_2) = A_2^2$, and $C_{\text{sys}}(\bv{\lambda}) = \max(C_1, C_2)$. 

In Fig. \ref{fig:mg11-plot} we plot the cost values for the $M/G/1/1$ queue. The minimum value of $C_{\text{sys}}(\bv{\lambda})$ is achieved at $\lambda_1=10$ and $\lambda_2=6$, with a resulting $C_1=60.84$ and $C_{\text{sys}}(\bv{\lambda}) = C_2 = 61.36$. In this case, the PAoI vector is $\bv{A}=(3.9, 7.83)$ and the results match Lemma \ref{lemma:mg11-pro}. 
\begin{figure}[cht]
\centering
\includegraphics[height=1.3in, width=2.4in]{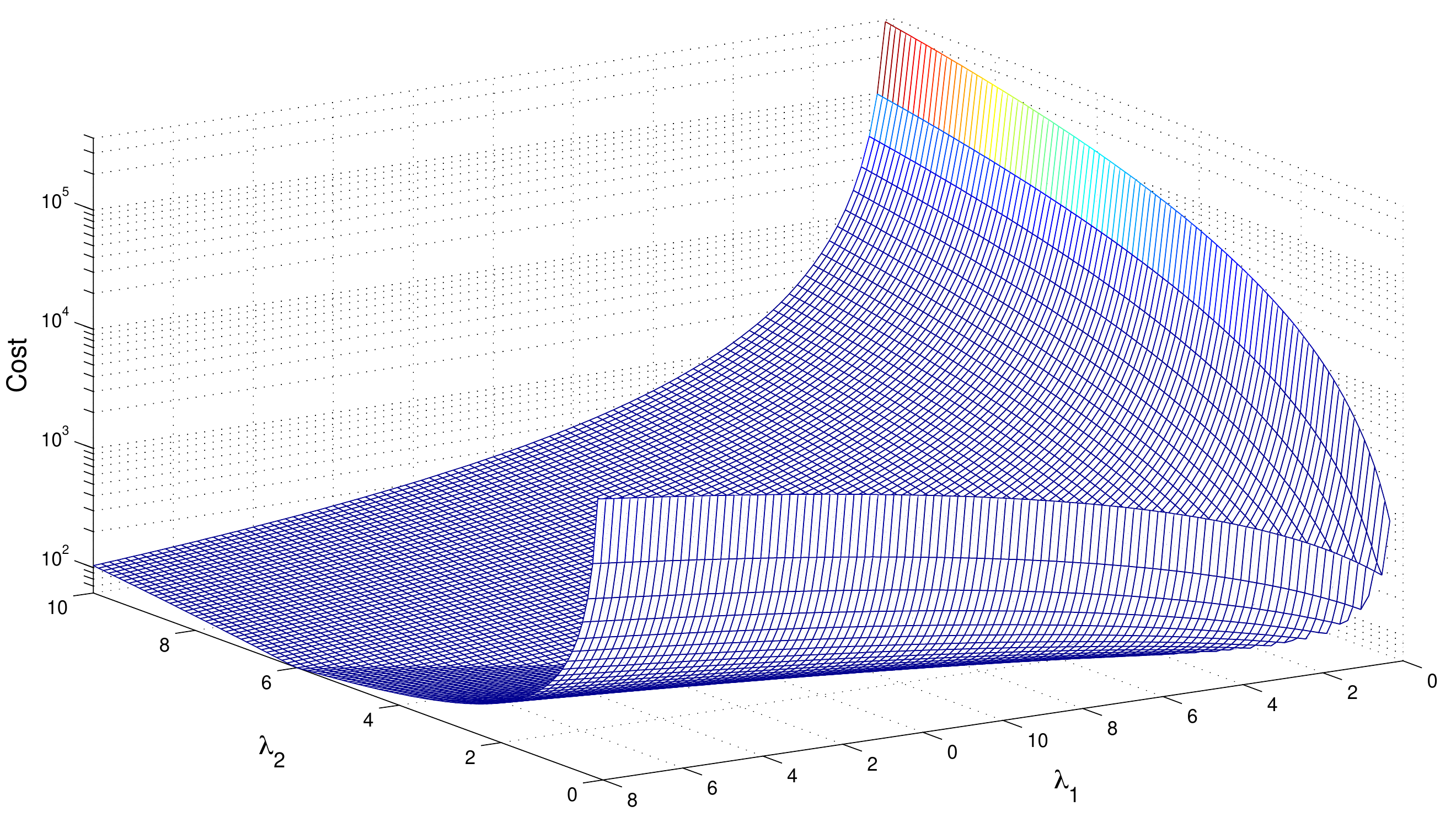}
\vspace{-.1in}
\caption{Cost value for the $M/G/1/1$ queue. }\label{fig:mg11-plot}
\vspace{-.12in}
\end{figure}

We next look at the $M/G/1$ case in Fig. \ref{fig:mg1-plot}. Since $\rho<1$ must be satisfied to ensure a finite PAoI, if a $\bv{\lambda}$ violates $\rho<1$, we  set its PAoI value to a constant (the flat region). In this case,  the minimum is achieved at $\bv{\lambda}=(0.29, 0.125)$ with $\bv{A}=(6.56, 13.11)$. Thus, $C_2=171.92$ and $C_1=C_{\text{sys}}(\bv{\lambda}) = 172.15$. It can be verified that (\ref{eq:relation-aq}) holds. 
%
\begin{figure}[cht] 
\centering
\vspace{-.15in}
\includegraphics[height=1.3in, width=2.4in]{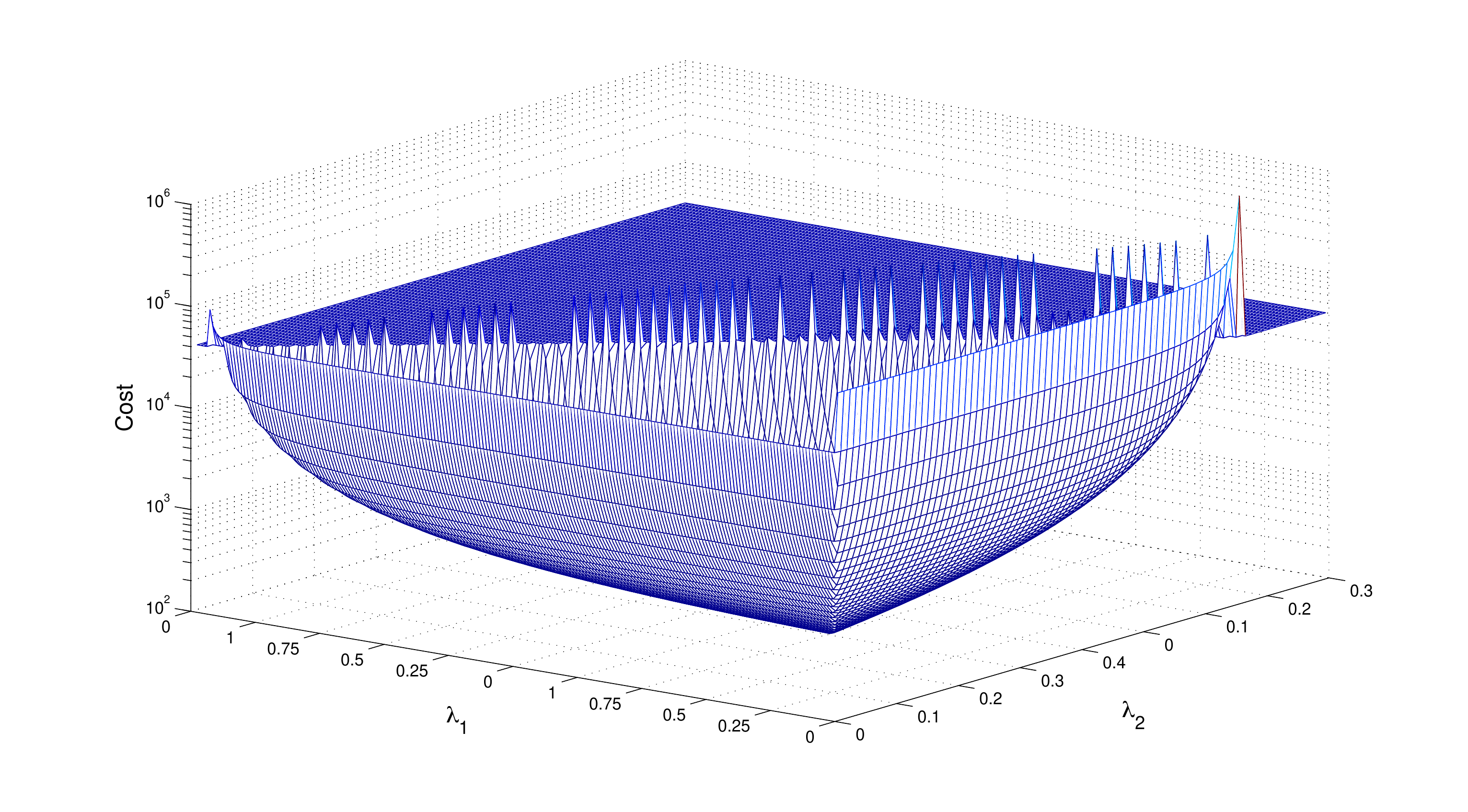}
\vspace{-.1in}
\caption{Cost value for the $M/G/1$ queue. }\label{fig:mg1-plot}
\vspace{-.12in}
\end{figure}

We also compute the optimal solution of the approximation approach to be $\bv{\lambda}^*_B = (0.285, 0.17)$. The  resulting PAoI vector is $\bv{A}=(8.94, 13.31)$ and the cost vector is $(C_1, C_2) = (319.69, 177.16)$, implying $C_{\text{sys}}(\bv{\lambda}^*_B) = 319.69$. It is not hard to verify that $C_{\text{sys}}(\bv{\lambda}^*_B)\leq 2C_{\text{sys}}(\bv{\lambda}^*)$. 

\section{Conclusion}\label{section:conclusion}
We study the age-of-information in a general multi-class $M/G/1$ queueing system.  The age-of-information is a new metric for system performance that represents not just the queueing delay, but also the delay in generating new information updates.  Our main contribution is to generalize the available results to systems with heterogeneous service time distributions, accounting for the fact that different entities may have different service requirements for their status updates.  We derive exact  peak-age-of-information expressions for both a $M/G/1$ system and $M/G/1/1$ system.  Using the PAoI measure allows us to optimize  system cost, as a function of PAoI,  by choice of the update interval.  
 
 \section{Acknowledgement}
This work was supported in part by the National Basic Research Program of China Grant 2011CBA00300, 2011CBA00301, the National Natural Science Foundation of China Grant 61033001, 61361136003, 61303195, Tsinghua Initiative Research Grant, and the China Youth 1000-talent Grant.  
The work of E. Modiano was supported by NSF Grant CNS-1217048 and ONR Grant N00014-12-1-0064.

\bibliographystyle{unsrt}
\bibliography{mybib}

 \section*{Appendix A -- Proof of Lemma \ref{lemma:aoi-comparison}}
 Here we prove Lemma \ref{lemma:aoi-comparison}. We drop all subscripts as $N=1$. 
 \begin{proof}
Since $T=W+X$ and $X$ is independent of $I$, we have $\expect{I_nT_n} - \expect{I_n}\expect{T_n} = \expect{IW} - \expect{I}\expect{W}$. 
Denote $T_Q$ the time it takes to clear the packets in the queue when a new packet arrives (not including the new arrival). Then, $W=(T_Q-I)^+$. Here $I$ is the inter-arrival time until the next packet arrives. 
Since $I$ is independent of $T_Q$, we have: 
\begin{eqnarray}
&& \expect{IW} \nonumber\\
&=& \int_{I}\int_{t=I}^{\infty} I (t-I)f(t)dtf(I)dI\nonumber\\
&=&\int_{I} \bigg(I \int_{t=I}^{\infty}tf(t)dt-I^2\prob{T_Q\geq I}  \bigg)f(I)dI\label{eq:iw-ineq0}\\
&\leq&\int_{I} I \int_{t=I}^{\infty}tf(t)dtf(I)dI.\label{eq:iw-ineq1} 
\end{eqnarray}
Using $(T_Q-I)^+ + I\geq T_Q$ and $I\geq0$, we have: 
\begin{eqnarray}
\int_{t=I}^{\infty}tf(t)dt\leq \expect{T_Q} \leq \expect{(T_Q-I)^+} +\expect{I}. 
\end{eqnarray}
Plugging this into (\ref{eq:iw-ineq1}), we get: 
\begin{eqnarray}
\expect{IW}  
&\leq&\int_{I} I \big(\expect{(T_Q-I)^+} +\expect{I}\big)f(I)dI  \nonumber \\
&=& \expect{I}\expect{(T_Q-I)^+} +\expect{I}^2.  
\end{eqnarray}
Using this in (\ref{eq:general-peak-avg}), we obtain: 
\begin{eqnarray}
A^{gg1}_{av} \leq  A^{gg1}_{p}+ \frac{\lambda \expect{I^2}}{2}. \nonumber
\end{eqnarray}

To derive the lower bound, we have from (\ref{eq:iw-ineq0}) that: 
\begin{eqnarray}
\hspace{-.4in}&& \expect{IW}  =\int_{I} \bigg(I \int_{t=I}^{\infty}tf(t)dt-I^2\prob{T_Q\geq I}  \bigg)f(I)dI \nonumber\\
\hspace{-.4in}&&\qquad\quad\,\,\,\geq \int_{I} I \bigg(\expect{T_Q}-\int_{t=0}^{I}tf(t)dt \bigg)f(I)dI -\expect{I^2}\nonumber\\
\hspace{-.4in}&&\qquad\quad\,\,\,\geq \int_{I} I \bigg(\expect{T_Q}- I  \bigg)f(I)dI -\expect{I^2}\nonumber\\
\hspace{-.4in}&&\qquad\quad\,\,\, \geq \expect{I} \expect{W} - 2\expect{I^2}. \label{eq:iw-ineq2}
\end{eqnarray}
In the last inequality, we have used $T_Q\geq W$. 
Plugging (\ref{eq:iw-ineq2}) into (\ref{eq:general-peak-avg}) proves the lower bound and completes the proof of the lemma. 
\end{proof}

\section*{Appendix B -- Proof of Lemma \ref{lemma:approx}}
We prove Lemma \ref{lemma:approx} here. 
\begin{proof}
From the definition of $B_n$, we see that given any $\bv{\lambda}$, 
$A^{mg1}_{p, n}(\bv{\lambda})\leq B_n(\bv{\lambda})\leq 2A^{mg1}_{p, n}(\bv{\lambda})$. Thus, we have for $\bv{\lambda}^*_B$ that: 
\begin{eqnarray}
\hspace{-.3in}&&C_{\text{sys}}(\bv{A}(\bv{\lambda}^*_B))\leq C_{\text{sys}}(\bv{B}(\bv{\lambda}^*_B))\label{eq:approx-ineq}\\
\hspace{-.3in}&&\qquad \qquad \quad\,\,\,\, \leq C_{\text{sys}}(\bv{B}(\bv{\lambda}^*))\leq C_{\text{sys}}(2\bv{A}(\bv{\lambda}^*)). \nonumber
\end{eqnarray}
Using the definition of $\beta_n$, we have for each $n$ that: 
\begin{eqnarray}
C_n(2A_n(\bv{\lambda}^*))\leq C_n(A_n(\bv{\lambda}^*) + \beta_nA^*_n. 
\end{eqnarray}
Taking the $\max$ over the above and combining it with (\ref{eq:approx-ineq}), we see that  (\ref{eq:c-sys-approx}) follows. 
\end{proof}

\end{document}